\documentclass[12pt, reqno]{amsart}

\usepackage{amsmath,amsfonts, amsthm, amssymb, color, cite}
\usepackage{color}
\usepackage{fancyhdr}
\usepackage{graphicx}
\usepackage{epstopdf}
\usepackage{lastpage}

\newtheorem{thm}{Theorem}[section]
\newtheorem{lma}{Lemma}[section]
\newtheorem{pro}{Proposition}[section]

\theoremstyle{definition}

\theoremstyle{remark}

\numberwithin{equation}{section}
\allowdisplaybreaks

\def\f{\frac}

\def\hf1{^\f{1}{1-\xi^2}}

\def\be{\begin{equation}}
\def\en{\end{equation}}
\def\bs{\begin{split}}
\def\es{\end{split}}
\def\ba{\begin{align}}
\def\ea{\end{align}}

\author[Lin Chang]{Lin Chang}
\address{Department of Mathematics, Handan University, Handan, Hebei, P.R.China.}
\email{changlin23@163.com}

\author[Yuling Xu*]{Yuling Xu*}
\address{Department of Applied Mathematics,The Hong Kong Polytechnic University,Hong Kong,P.R.China.}
\email{lingda.xu@polyu.edu.hk}

\renewcommand{\fancyhead}{}
\title{   C\MakeLowercase{onvergence} r\MakeLowercase{ate} \MakeLowercase{toward} r\MakeLowercase{arefaction} w\MakeLowercase{ave} \MakeLowercase{under} p\MakeLowercase{eriodic} p\MakeLowercase{erturbation} \MakeLowercase{for} g\MakeLowercase{eneralized} K\MakeLowercase{orteweg}-\MakeLowercase{de} V\MakeLowercase{ries}-B\MakeLowercase{urgers} \MakeLowercase{equation}          }

\keywords{Rarefaction wave,  Cauchy problem, Korteweg-de Vries-Burgers equation, Periodic perturbations, Asymptotic behavior}
\date{\today}
\thanks{*Corresponding author}

\usepackage{subfig}
\usepackage{graphicx}
\begin{document}
\begin{abstract}
In this paper, a rarefaction wave under space-periodic perturbation for the generalized Korteweg-de Vries-Burgers equation is considered. It is shown that if the initial perturbation around the rarefaction wave is suitably small, then the solution of the  system tends to the rarefaction wave. The stability of solution under periodic perturbation is an interesting and important problem since the perturbation keeps oscillating at the far fields.

\
The key of proof is to construct a suitable ansatz carrying the same oscillation as the solution. Then we can find cancellations between solutions and ansatz such that
the perturbation belongs to some Sobolev space. The nonlinear stability can be obtained by  the energy method.
\end{abstract}
\maketitle
\section{Introduction and main theorems.}
We consider the asymptotic behavior of solution to the Cauchy problem for the generalized Korteweg-de Vries Burgers equation
\begin{align}\label{O1.1}
u_{t}+ f(u)_{x}+ {\mu} u_{xxx}{ {=}} {\gamma} u_{xx},    &\ x\in \mathbb{R},\ t>0,
\end{align}
where  the flux $f(u)\in C^1$  is  strictly convex, ${\mu}>0$ the dispersive coefficient,  $ {\gamma}>0$   the viscosity. We are interested in the global asymptotic stability of the rarefaction wave to (\ref{O1.1}). In this paper, we consider a Cauchy problem of \eqref{O1.1}  with the initial data    satisfying
\begin{equation}\label{O1.2}
u_{0}(x):{ {=}} u {(x,0)}\rightarrow \left\{\begin{array}{l}
\bar{u}_{l}+w_{0l}(x), \quad \quad  x\rightarrow   -\infty,\\
\bar{u}_{r}+w_{0r}{  {(x)}}, \quad \quad x \rightarrow \infty,
\end{array} \quad\right.
\end{equation}
where $\bar{u}_{l}$,   $\bar{u}_{r}$    are constants. Function $ w_{0 i}(x) \in L^{\infty}(\mathbb{R})$   is a periodic function with period $p_i>0$,($i{ {=}}r,l$) satisfying
\begin{equation}\label{O1.3}
\frac{1}{p_{i}} \int_{0}^{p_{i}} w_{0i}{{(x)}}\mathrm{d} x{ {=}} 0.
\end{equation}

As we all know, if  $ w_{0l}(x)=w_{0r}(x)=0$, the asymptotic stability of \eqref{O1.1} with the initial-data \eqref{O1.2} has been extensively studied, regardless of whether $\bar{u}_{l}>\bar{u}_{r}, $     or $\bar{u}_{l}<\bar{u}_{r} $.  We list some previous works below.

\

For $\bar{u}_{l}>\bar{u}_{r}$, the KdV-Burgers equation   admits the viscous shock wave solution as $\gamma \gg 1$, see \cite{BS1985}\cite{BR1994}\cite{GH1967} \cite{NR1998}, Moreover,   the exponential time decay rate was further obtained in Yin-Zhao-Zhou \cite{YZZ2009} provided that the initial value converges to the shock exponentially at the far field. For $\bar{u}_{l}<\bar{u}_{r}$,  Wang-Zhu \cite{WZ2002} shows that  the KdV-Burgers equation   tends to
 the centered rarefaction wave $U^{r}(x/t),$ which will be  defined by (\ref{O2.3}) in Section \ref{section2}. We refer to\cite{DZ2007},\cite{YR2009} for some more interesting works.

However, the situation where ${w}_{0l}\left( x\right)  \neq  0$ and ${w}_{0r}\left( x\right) \neq0$ becomes significantly more complex, as the standard energy method fails. Specifically, we apply the energy method to study the stability of the rarefaction wave ${U}^{r}$, where the ``energy" of the perturbation``$u - {U}^{r}$" should belong to some Sobolev spaces such as ${H}^{1}\left( \mathbb{R}\right)$. However, the above method cannot be directly applied in this paper because $u - {U}^{r}$ oscillates at the far field and thus does not belong to any ${L}^{p}$ space for $p \geq  1$. By introducing a suitable ansatz, Chang \cite{C2024} has recently shown that the asymptotic behavior of the solution as time goes to infinity still holds in this case. Therefore, in this paper, we discuss the case where ${\bar{u}}_{l} < {\bar{u}}_{r}$.

 Our goal is to establish the nonlinear stability of the rarefaction wave for the Cauchy problem \eqref{O1.1}-\eqref{O1.2} under periodic perturbations. Roughly speaking, the solution not only exists globally but also tends to a rarefaction wave  as time goes to infinity. The precise statements of the main results are given in Theorem \ref{theorem201}  in Section 2.
\
 
 Motivated by \cite{C2024} \cite{XYY2019}  , we introduce a suitable ansatz $U(x,t)$, which has the same oscillations as the solution $u(x,t)$ at the far field, so that $u-U^r$  belongs to some Sobolev spaces and the anti-derivative method is still available.

The ansatz is defined as ${U}= u_{l}{(x,t)}g(x,t) + u _ { r }  {(x,t)} [ 1 - g   ( x,t ) ]$, where $u_{ l }$ is a periodic solution of \eqref{O1.1} with the initial data $\bar{u}_{l}+w_{0 l}{{(x)}}$ in \eqref{O1.2} and is expected to have the same oscillation as $u$ near $x= -\infty$. Similarly $u_r$ is expected to have the same oscillation as $u$ near $x=\infty$. And $g(x,t)$ is a weighted function satisfying $\mathop{\lim }\limits_{{x \rightarrow   - \infty }}g\left( {x,t}\right)  = 1,\mathop{\lim }\limits_{{x \rightarrow   + \infty }}g\left( {x,t}\right)  = 0,$ which will be determined later.   Thus the perturbation $u - U(x,t)$ could belong to ${L}^{2}$.

The rest of the paper will be arranged as follows. In Section \ref{section2}, a suitable ansatz is constructed and the main results are stated. In Section \ref{section3},  the stability problem is reformulated to a perturbation equation around the ansatz. In Section \ref{section4},   The a priori estimate is established.   In Section \ref{section5},   the main result  is proved.
\noindent

\textbf { Notation.} The functional $\|\cdot\|_{L^p(\Omega)}$ is defined by $\| f\|_{L^p(\Omega)} =  (\int_{\Omega}|f|^{p}(x)\mathrm{d}x)^{\frac{1}{p}}$. The symbol $\Omega$ is often omitted  when $\Omega=(-\infty,\infty)$. We denote for simplicity
\begin{equation*}
\| f\| =  \left(\int_{ -\infty}^{ \infty}f^{2}(x)\mathrm{d}x\right)^{\frac{1}{2}}
\end{equation*}
as $p=2$. In addition, $H^m$ denotes the  $m$-th  order Sobolev space of functions defined by
 \begin{equation*}
\|f\|_{m} =  \left( \sum_{k=0}^{m}  \|\partial^{k}_{\xi}f\|^2 \right)^{\frac{1}{2}}.
\end{equation*}

\section{Preliminaries and  Main Results}\label{section2}
\subsection{ Suitable   Ansatz}

We consider the following Riemann problem:

\begin{align}\label{O2.1}
	w^{r}_{t}+   \frac{1}{2}\left[(w^{r})^{2}\right]_{x}=0,    &\ x\in \mathbb{R},\ t>0,
\end{align}

with the initial data

\begin{equation}\label{sO1.2}
	w{(x,0)}= w_0^r(x)= \left\{\begin{array}{l}
		\bar{w}_{l}, \quad \quad  x<0,\\
		\bar{w}_{r}, \quad \quad x >0.
	\end{array} \quad\right.
\end{equation}
When $\bar{w}_{l}< \bar{w}_{r}$, it is well-known that the solution of the Riemann problem (\ref{O2.1}),(\ref{sO1.2}) is the centered rarefaction wave ${w}^{r}\left( {x,t}\right)  = {w}^{r}\left( {x/t}\right)$, where
\begin{equation*}
	{w}^{r}\left( {x/t}\right)  = \left\{  \begin{array}{ll} \bar{w}_{l}, & x \leq  \bar{w}_{l}t, \\
		x/t, & \bar{w}_{l}t < x < \bar{w}_{r}t, \\
		\bar{w}_{r}, & x \geq  \bar{w}_{r}t.
	\end{array}\right.
\end{equation*}
Hence ${u}^{r}\left( {x,t}\right)$ is rewritten as ${u}^{r}\left( {x/t}\right)  = {\left( {f}^{\prime }\right) }^{-1}\left( {{w}^{r}\left( {x/t}\right) }\right)$ with $\bar{w}_{ l,r } = {f}^{\prime }\left( \bar{u}_{ l,r }\right)$. To analyze the stability of this wave, we construct a smooth approximation   solution of the Riemann solution ${w}^{r}\left( {x/t}\right)$ is constructed as follows:
\begin{equation*}
	\left\{  \begin{array}{l} {W}_{t} + W{W}_{x} = 0, \\  W\left( {x,0}\right)  = {W}_{0}\left( x\right)  = \frac{1}{2}\left( {\bar{w}_{r} + \bar{w}_{l}}\right)  + \widetilde{w}{K}_{q}{\int }_{0}^{\varepsilon x}{\left( 1 + {y}^{2}\right) }^{-q}\mathrm{d} y, \end{array}\right.  
\end{equation*}
where  $\widetilde{w} = \frac{1}{2}\left( {\bar{w}_{r} - \bar{w}_{l}}\right)  > 0,\varepsilon  > 0$ is a constant. And ${K}_{q}$ is a positive constant such that
\begin{equation*}
	{K}_{q}\int_{-\infty}^{\infty }\frac{\mathrm{d}y}{{\left( 1 + {y}^{2}\right) }^{q}} = 1\;\left( {q > \frac{1}{2}}\right) .
\end{equation*}
Introduce  ${U^{r}}\left( {x,t}\right)$ by ${U^{r}}\left( {x,t}\right)  = {\left( {f}^{\prime }\right) }^{-1}\left( {{w^r}\left( {x,t}\right) }\right)$, satisfying 
\begin{equation}
	\left\{  \begin{array}{l} U^{r}_{t} + f{\left( {U^{r}}\right) }_{x} = 0, \\  {\left. {U^{r}}\right| }_{t = 0} = U^{r}_{0}\left( x\right)  = {\left( {f}^{\prime }\right) }^{-1}\left( {W_{0}\left( x\right) }\right)  \rightarrow  \bar{u}_{ l,r  },x \rightarrow   \pm  \infty . \end{array}\right.  \label{O2.3}
\end{equation}
\label{yl2.1}\begin{lma}\cite[Lemma 2.2]{WZ2002}Setting $\varepsilon  =  \frac{1}{2}\left( {\bar{u}_{ r} - \bar{u}_{ l}}\right),q = 1$, the problem (\ref{O2.3}) has a unique global smooth solution ${U^{r}}\left( {x,t}\right)$ satisfying:
	\begin{itemize}
		\item [\rm(i)]   $\bar{u}_{l } < {U^{r}}\left( {x,t}\right)  < \bar{u}_{ r},U^{r}_{x}\left( {x,t}\right)  > 0$ for each $\left( {x,t}\right)  \in  \mathbb{R} \times  {\mathbb{R}}^{ + }.$
		\item [\rm(ii)]  For any $p$ with $1 \leq  p \leq  \infty$ , there exists a constant ${C}_{p}$ depending on $p$ such that
		\begin{equation*}
			\begin{split}
				&{\left\|U^{r}_{x}\right\|{}}_{{L}^{p}}^{p} \leq  h\left( \varepsilon\right) {C}_{p}{\left( 1 + t\right) }^{-p + 1},\\
				&{\left\|U^{r}_{xx}\right\|{}}_{{L}^{p}}^{p} \leq  h\left( \varepsilon\right) {C}_{p}{\left( 1 + t\right) }^{-\frac{{3p} - 1}{2}},\\
				&{\left\|U^{r}_{xxx}\right\|{}}_{{L}^{p}}^{p} \leq  h\left( \varepsilon\right) {C}_{p}{\left( 1 + t\right) }^{-{2p} + 1},\\
				&\| U^{r}_{xxxx}{\| }_{{L}^{p}}^{p} \leq  h\left( \varepsilon\right) {C}_{p}{\left( 1 + t\right) }^{-\frac{{5p} - 3}{2}},\\
				&{\left\| {(U^{r}_{xx})^{2}}(U^{r}_{x})^{-1}\right\|}_{{L}^{1}}   \leq  {Ch}\left( \varepsilon\right) {\left( 1 + t\right) }^{-\frac{3}{2}},\\
				&{\left\| {({U^{r}_{xxx}}^{2})}{(U^{r}_{x})^{-1}}\right\|}_{{L}^{1}}  \leq  {Ch}\left( \varepsilon\right) {\left( 1 + t\right) }^{-2},\\
			\end{split}
		\end{equation*}
		where $h\left( \varepsilon\right)$ is a function of $\varepsilon$ and satisfies $\mathop{\lim }\limits_{{\varepsilon \rightarrow  0}}h\left( \varepsilon\right)  = 0 .$ 
		\item [\rm(iii)] 
		\begin{equation*}  
			{\left\| {\partial }_{t}^{j}{\partial }_{x}^{k}{U^{r}}\right\| }_{L^{\infty} } \leq  C{\left| \bar{u}_{ r} - \bar{u}_{l }\right| }^{j + k + 1},j,k \geq  0,j + k \leq  4.
		\end{equation*}
		\item [\rm(iv)]  
		\begin{equation*}
			\mathop{\sup }\limits_{\mathbb{R}}\left| {{U^{r}}\left( {x,t}\right)  - {u}^{r}\left( {x/t}\right) }\right|  \rightarrow  0\text{ as} t \rightarrow  \infty.
		\end{equation*}
	\end{itemize}
\end{lma}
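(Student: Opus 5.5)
The plan is to reduce everything to the inviscid Burgers problem for $W$ and to exploit its explicit solution by characteristics. Since $W_0'(x)=\widetilde{w}K_q\varepsilon(1+\varepsilon^2x^2)^{-q}>0$, the characteristics $x=x_0+W_0(x_0)t$ never cross. Hence $W(x,t)=W_0(x_0(x,t))$, where $x_0(x,t)$ is the unique root of $x=x_0+tW_0(x_0)$. This root exists and is unique because $\partial_{x_0}(x_0+tW_0(x_0))=1+tW_0'(x_0)\ge 1$. The implicit function theorem then gives a global smooth solution. Differentiating yields the key formula
\begin{equation*}
W_x=\frac{W_0'(x_0)}{1+tW_0'(x_0)}>0 .
\end{equation*}
Uniqueness follows from the classical uniqueness of smooth solutions of scalar conservation laws. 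We then set $U^r=(f')^{-1}(W)$. Strict convexity makes $(f')^{-1}$ smooth and increasing, at least assuming $f\in C^5$ as needed implicitly. So $U^r_t+f(U^r)_x=0$ holds, and the bounds $\bar w_l<W<\bar w_r$ and $W_x>0$ transfer to (i).

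For (ii), I will change variables $x\mapsto x_0$ in the integrals, using $dx=(1+tW_0'(x_0))\,dx_0$. This gives
\begin{equation*}
\|W_x\|_{L^p}^p=\int\frac{(W_0')^p}{(1+tW_0')^{p-1}}\,dx_0 .
\end{equation*}
Split at $|x_0|\lesssim \varepsilon^{-1}$, where $W_0'\sim\widetilde w\varepsilon$, and the complement, where $W_0'$ decays like $\varepsilon^{1-2q}|x_0|^{-2q}$ with $q=1$. In each region, bound $(1+tW_0')^{-1}\le\min(1,(tW_0')^{-1})$; this produces $C_p h(\varepsilon)(1+t)^{-p+1}$. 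For higher derivatives, differentiate the formula repeatedly:
\begin{equation*}
W_{xx}=\frac{W_0''}{(1+tW_0')^3},
\end{equation*}
and similarly for $W_{xxx}$ and $W_{xxxx}$, which involve $W_0'''$, $W_0''''$ and products with extra powers of $t$ over higher powers of $(1+tW_0')$. The pointwise bound $|W_0^{(k)}|\le C\varepsilon^{k-1}W_0'\,(1+\varepsilon|x_0|)^{-(k-1)}$ lets one estimate each term the same way. Interpolating the two regimes gives the half-integer exponents $-(3p-1)/2$ and $-(5p-3)/2$. The weighted quantities $(W_{xx})^2/W_x$ and $(W_{xxx})^2/W_x$ are handled identically; there the ratio $(W_0'')^2/W_0'$ is again controlled by the same weight.

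Passing from $W$ to $U^r$ uses the chain rule, for example $U^r_x=W_x/f''(U^r)$ and $U^r_{xx}=W_{xx}/f''-f'''W_x^2/(f'')^3$. The extra products such as $W_x^2$ decay at least as fast as the leading term, since $\|W_x\|_{L^\infty}\le C(1+t)^{-1}$. Combined with Hölder's inequality, this gives the claimed rates. The factor $h(\varepsilon)$ comes from tracking the powers of $\varepsilon$ in the small-$t$ regime, with $\varepsilon=\tfrac12(\bar u_r-\bar u_l)$.

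For (iii), the same formulas with $1+tW_0'\ge1$ give $|\partial_x^kW|\le C\varepsilon^{k}\widetilde w$, and time derivatives are converted via $W_t=-WW_x$. This produces powers of $|\bar u_r-\bar u_l|$ since $\widetilde w\sim\varepsilon$. For (iv), compare $W(x,t)$ with $w^r(x/t)$ along characteristics. For $\bar w_lt<x<\bar w_rt$ we have $x/t-W=x_0/t$. For $x$ beyond the fan the difference is $|W_0(x_0)-\bar w_{l,r}|$, and this tends to $0$ when $|x_0|\to\infty$; when $|x_0|$ stays bounded, the discrepancy is $O(|x_0|/t)$. One chooses a cutoff $|x_0|\le t^{1/2}$ and obtains uniform convergence, which carries over to $U^r$ by Lipschitz continuity of $(f')^{-1}$.

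The main obstacle is the bookkeeping in (ii) for the third and fourth derivatives and the weighted terms. There the case split in $x_0$ must be done carefully to obtain exactly the stated exponents and a factor $h(\varepsilon)\to0$. I would handle it by proving the single pointwise estimate $|\partial_x^kW|\le C\,W_x\,\min\{\varepsilon^{k-1},t^{-(k-1)/2}W_x^{-(k-1)/2}\}$ and integrating. Since the lemma is quoted from \cite{WZ2002}, one could alternatively cite that paper.
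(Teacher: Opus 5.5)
The paper gives no argument of its own here; it only quotes \cite[Lemma 2.2]{WZ2002}. Your characteristic representation $W_x=W_0'/(1+tW_0')$, together with the change of variables $\mathrm{d}x=(1+tW_0')\,\mathrm{d}x_0$, is the standard route. It does give existence, (i), (iv), and (iii) for $j+k\ge1$.

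\textbf{The factor $h(\varepsilon)$ in (ii).} You attribute it to ``tracking the powers of $\varepsilon$ in the small-$t$ regime.'' But the bound must hold uniformly in $t$, and for large $t$ the constants in the second and fifth lines are not small. So these lines cannot be proved as quoted. For $p=1$, use that $U^r_x=W_x/f''(U^r)>0$ and $U^r_x\to0$ as $x\to\pm\infty$:
\[
\|U^r_{xx}(t)\|_{L^1}\ge 2\sup_x U^r_x(x,t)\ge \frac{2}{\max_{[\bar u_l,\bar u_r]}f''}\cdot\frac{W_0'(0)}{1+tW_0'(0)} .
\]
Hence $\liminf_{t\to\infty}(1+t)\|U^r_{xx}(t)\|_{L^1}\ge 2/\max f''$, which contradicts $h(\varepsilon)C_1\to0$.

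For general $p$, set $q=1$, $a=\widetilde w K_1$, $y=\varepsilon x_0$ and $s=ta\varepsilon$. Your change of variables then gives exactly
\[
\|W_{xx}\|_{L^p}^p=(2a)^p\varepsilon^{2p-1}\int_{\mathbb{R}}\frac{|y|^p(1+y^2)^{p-1}}{(1+y^2+s)^{3p-1}}\,\mathrm{d}y,\qquad \int_{\mathbb{R}}\frac{W_{xx}^2}{W_x}\,\mathrm{d}x=4a\varepsilon^2\int_{\mathbb{R}}\frac{y^2(1+y^2)}{(1+y^2+s)^{4}}\,\mathrm{d}y .
\]
As $t\to\infty$ these behave like $c_p(\varepsilon/a)^{(p-1)/2}t^{-(3p-1)/2}$ and $c\,(\varepsilon/a)^{1/2}t^{-3/2}$. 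The choice $\varepsilon=\frac12(\bar u_r-\bar u_l)$ forces $a\sim\varepsilon$, so both constants are of order one. What your method actually yields is a bound such as $\int W_{xx}^2/W_x\,\mathrm{d}x\le Ca\varepsilon^2(1+ta\varepsilon)^{-3/2}$. Its time integral is $O(\varepsilon)$, which is all the energy estimate in Lemma \ref{ly7} needs. It is not, however, the stated pointwise-in-$t$ bound.

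\textbf{The decay exponents.} Even setting $h(\varepsilon)$ aside, the device you propose for the bookkeeping cannot produce the stated exponents. That device is the pointwise bound $|\partial_x^kW|\le CW_x\min\{\varepsilon^{k-1},(tW_x)^{-(k-1)/2}\}$, followed by integration.
\begin{itemize}
\item For $t\ge2/(a\varepsilon)$, the image of $\{|\varepsilon x_0|\le1\}$ is an $x$-interval of length at least $c\,\widetilde w t$.
\item On this interval $\frac{1}{2t}\le W_x\le\frac1t$, so your majorant equals $C\varepsilon^{k-1}W_x$.
\item Its $p$-th power therefore integrates to at least $c\,\varepsilon^{(k-1)p}\widetilde w\,t^{1-p}$. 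For large $t$ this exceeds $t^{-(3p-1)/2}$, $t^{1-2p}$ and $t^{-(5p-3)/2}$ by a positive power of $t$.
\end{itemize}
A bound that sees $x$ only through $W_x$ cannot register that $W_0''(0)=0$. On $\{|\varepsilon x_0|\le1\}$ one has
\[
\frac{|W_{xx}|}{\varepsilon W_x}=\frac{2|y|}{1+y^2}(1+tW_0')^{-2}\lesssim s^{-2}.
\]
So $W_{xx}$ is negligible inside the fan. It lives in the edge layers $\{tW_0'\lesssim1\}=\{|y|\gtrsim s^{1/2}\}$, whose $x$-width is $\sim(ta/\varepsilon)^{1/2}$. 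The half-integer exponents come from keeping the factor $|\varepsilon x_0|/(1+\varepsilon^2x_0^2)$ inside the $x_0$-integral and splitting at this $t$-dependent scale, as in the identities above. Splitting at $|x_0|\sim\varepsilon^{-1}$ does not capture this.

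\textbf{Two smaller points.}
\begin{itemize}
\item Item (iii) cannot hold for $j=k=0$, since it would make $\|U^r\|_{L^\infty}$ small. Correspondingly, your bound $|\partial_x^kW|\le C\varepsilon^k\widetilde w$ needs $k\ge1$.
\item With the paper's normalization $K_q\int_{\mathbb{R}}(1+y^2)^{-q}\,\mathrm{d}y=1$, one has $W_0(\pm\infty)\ne\bar w_{r,l}$. So (iv) requires $K_q\int_0^\infty(1+y^2)^{-q}\,\mathrm{d}y=1$, which your argument tacitly uses.
\end{itemize}
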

\subsection{Suitable Ansatz}
We construct weighted functions below. We assume that the  function  $u_i {(x,t)} $  is   the   solution  of \eqref{O2.1} with the  initial data ($ i{  {=}}l,r$):
\begin{equation*}
	u_{ i 0 } ^ { } (x):=\bar { u }_{ i} + w _{ 0 i } (x).
\end{equation*}
Function $g(x,t)$ is introduced by
\begin{equation}
	g(x,t)  := \frac {  {U^{r}} ( {x,t} ) - \bar { u }_{ r } } { \bar { u }_{ l } - \bar { u }_{ r } },\label{x2.5}
\end{equation}
satisfying
\begin{equation}
	\lim_{x\rightarrow -\infty} g(x,t)=1, \lim_{x\rightarrow +\infty} g  (x ,t )=0,\label{y2.5}
\end{equation}
\label{yl2}\begin{lma}\cite[Lemma 2.3]{C2024}
	Assume that $u_0\in H^{k+1} (0,p)  $  is a periodic function with period $p>0$ for any integer $k\geq0$. Then the periodic solution $u(x,t)$ of \eqref{O1.1} satisfies
	\begin{equation*}
		\begin{array}{l}
			\left\| \partial_{{ {x}}}^{k}  (u-\bar{u})  \right\|_{L^{\infty}(\mathbb{R})} \leq C  \|u_{0}-\bar{u} \|_{H^{k+1}(0,p)} e^{-\beta t}, \quad t \geqslant 0,
		\end{array}
	\end{equation*}
	where $\bar{u}=\frac{1}{p}\int_{0}^{p}u_0({{x}})d{{x}} $ and the positive constants $ C$, $\beta$  are independent of time $t$.
\end{lma}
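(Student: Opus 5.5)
We prove the lemma for a $p$-periodic solution $u$ of \eqref{O1.1} with initial data $u_0\in H^{k+1}(0,p)$. The plan is an energy method on one period cell $[0,p]$, using that $f$ is $C^1$ and strictly convex, together with Poincar\'e's inequality for mean-zero periodic functions.

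\textbf{Conserved mean and perturbation equation.} Integrating \eqref{O1.1} over a period, every term is an exact $x$-derivative of a periodic function, so $\frac{1}{p}\int_0^p u(x,t)\,dx=\bar u$ for all $t$. Set $v=u-\bar u$. Then $v$ has zero mean and solves
\[
v_t+(f(\bar u+v)-f(\bar u))_x+\mu v_{xxx}=\gamma v_{xx}.
\]

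\textbf{Basic $L^2$ estimate.} Multiply by $v$ and integrate over $[0,p]$. The dispersive term gives $\int v v_{xxx}=0$ by periodicity. For the flux term write $G(v)=\int_0^v s f'(\bar u+s)\,ds$; then $\int v\,(f(\bar u+v)-f(\bar u))_x\,dx=-\int v_x (f(\bar u+v)-f(\bar u))\,dx=-\int G(v)_x\,dx+\ldots$, and in any case it is the integral of an exact derivative of a periodic function, so it vanishes. Hence
\[
\tfrac12\tfrac{d}{dt}\|v\|^2_{L^2(0,p)}+\gamma\|v_x\|^2_{L^2(0,p)}=0 .
\]
Since $v$ has zero mean, Poincar\'e gives $\|v_x\|^2\ge (2\pi/p)^2\|v\|^2$. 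Therefore $\|v(t)\|_{L^2(0,p)}\le e^{-\gamma(2\pi/p)^2t}\|v_0\|_{L^2(0,p)}$, with no smallness assumption needed.

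\textbf{Higher derivatives.} Differentiate the equation $j$ times, for $j\le k+1$, and multiply by $\partial_x^j v$. The linear terms give $\frac12\frac{d}{dt}\|\partial_x^jv\|^2+\gamma\|\partial_x^{j+1}v\|^2$. The nonlinear term is $\int \partial_x^{j+1}v\,\partial_x^j(f(\bar u+v)-f(\bar u))\,dx$. Bound it by $\frac{\gamma}{2}\|\partial_x^{j+1}v\|^2+C\|\partial_x^j(f(\bar u+v)-f(\bar u))\|^2$, and control the last factor by Moser-type estimates in terms of $\|v\|_{L^\infty}$ and $\|v\|_{H^j}$.

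This is the main obstacle. For large data there is no small parameter to absorb the nonlinear terms, and controlling $\partial_x^j f$ requires more regularity of $f$ than $C^1$.
\begin{itemize}
\item For $L^\infty$ control I would first use the maximum principle, which gives $\min u_0\le u\le \max u_0$ in the parabolic case. With dispersion this fails, so instead use a Gagliardo--Nirenberg bound $\|v\|_\infty\le C\|v\|^{1/2}\|v_x\|^{1/2}$ combined with a uniform $H^1$ bound.
\item The $H^1$ bound comes from the $H^1$ energy: Gronwall yields boundedness, then exponential decay after a finite time once $\|v\|_{L^\infty}$ is small. This uses the already established exponential $L^2$ decay and interpolation.
\item Finally, apply Poincar\'e to $\partial_x^jv$, which also has zero mean, to get $\|\partial_x^jv(t)\|_{L^2(0,p)}\le Ce^{-\beta t}\|v_0\|_{H^{k+1}}$ for $j\le k+1$.
\end{itemize}

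\textbf{From $L^2$ to $L^\infty$.} By periodic Sobolev embedding, $\|\partial_x^kv\|_{L^\infty(\mathbb R)}\le C\|\partial_x^kv\|_{H^1(0,p)}\le C\|v_0\|_{H^{k+1}(0,p)}e^{-\beta t}$, which is the claimed estimate.

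The regularity of $f$ needed here is implicitly assumed; I would take $f$ smooth enough, as in \cite{C2024}.
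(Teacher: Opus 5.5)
The paper gives no proof of this lemma. It is quoted from \cite{C2024} and only applied in the small-data regime $\epsilon<\epsilon_0$ of Lemma \ref{yl4}. Your $L^2$ step is correct without any smallness: the mean is conserved, the flux and dispersive terms vanish, and Wirtinger's inequality gives the decay. The final embedding $\|\partial_x^k v\|_{L^\infty}\le C\|\partial_x^k v\|_{H^1(0,p)}$ is also fine.

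\textbf{The gap.} What is missing is a uniform-in-time bound $L:=\sup_t\|v(t)\|_{L^\infty}$. Every higher-order estimate depends on $L$ through $\sup_{|s|\le L}|f'(\bar u+s)|$ and the Moser constants. Your way of getting $L$ is circular. ``Gronwall yields boundedness'' is false as stated: $\frac{d}{dt}\|v_x\|^2+\gamma\|v_{xx}\|^2\le C(L)\|v_x\|^2$ only gives growth like $e^{C(L)t}$. Meanwhile $L$ is to come from $\|v\|_{L^\infty}^2\le 2\|v\|\,\|v_x\|$, using the very $H^1$ bound under construction. Even done optimally, with $\int_0^\infty\|v_x\|^2dt\le\|v_0\|^2/(2\gamma)$ in place of Gronwall, you only get $\sup_t\|v_x\|^2\le\|v_{0x}\|^2+C(L)\|v_0\|^2$. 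Combined with $L^2\le 2\|v_0\|\sup_t\|v_x\|$, this is self-consistent only if $\|v_0\|$ is small or $f'$ grows slowly (for instance a quadratic flux). With dispersion there is no maximum principle to supply $L$. So for large data and a general strictly convex $f$, your outline does not prove the lemma as literally stated, with no smallness assumption.

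\textbf{The fix.} Add the hypothesis that $\|u_0-\bar u\|_{H^1(0,p)}$ is small, which is all the paper ever uses. A continuity argument then gives $\sup_t\|v\|_{L^\infty}\le 1$. Instead of a ``decay after a finite time'' argument, use weighted time integrals. For $0<\beta\le\gamma(2\pi/p)^2$, the $L^2$ identity gives $\int_0^\infty e^{\beta t}\|v_x\|^2dt\le\|v_0\|^2/\gamma$. Multiply the level-$j$ inequality $\frac{d}{dt}\|\partial_x^jv\|^2+\gamma\|\partial_x^{j+1}v\|^2\le C\|\partial_x^jv\|^2$ by $e^{\beta t}$ to get
\[
e^{\beta t}\|\partial_x^jv(t)\|^2+\gamma\int_0^te^{\beta s}\|\partial_x^{j+1}v\|^2\,ds\le\|\partial_x^jv_0\|^2+(C+\beta)\int_0^te^{\beta s}\|\partial_x^jv\|^2\,ds .
\]
The right-hand side is controlled by the dissipation from the previous level. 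Induction up to $j=k+1$ gives the stated estimate, with $C,\beta$ independent of $t$ and linear in $\|v_0\|_{H^{k+1}(0,p)}$. As you note, this needs $f\in C^{k+1}$.
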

Motivated by \cite{XYY2019}, we construct an ansatz below
\begin{equation}\label{x2.6}
	{U}{(x,t)}:= u_{l}{(x,t)}g{(x,t)}+u_{r}{(x,t)}[1-g{(x,t)}].
\end{equation}
Note that $u_{i}, i=l,r$ is a periodic solution of \eqref{O1.1} with the initial data $\bar{u}_{i}+w_{0 i}{{(x)}}$ and is expected to have the same oscillation as $u$ near $x=\mp\infty$, respectively.   Thus ${U}{(x,t)}$ is expected to have the same oscillation as $u{(x,t)}$ at the far fields. Now we begin to  study the property of the shift. Since $U$ is not the solution of the KdV-Burgers  equation \eqref{O1.1}, the error term is
\begin{equation*}\label{x2.8}
	h:={U}_{t} +f({U})_{{{x}}} .
\end{equation*}

\begin{lma} \label{yl4}Under the assumptions of Lemma \ref{yl2}, there  exists a small constant $\epsilon_{0}$, such that if    ${{\epsilon}}:=\max \|w_{0i} \|_{H^1(\Omega)}<{{\epsilon}}_{0};i=l,r.$, one gets that
	$$  \left\|\frac{\partial^{j}}{\partial x^{j}}(U-U^{r}),h\right\|_{{L}^{1}}, \left\|\frac{\partial^{j}}{\partial x^{j}}(U-U^{r}),h\right\|\leq  {C\epsilon }{e}^{-{\beta t}},\;t \geq  0,   j=0,1,2,3$$
	where $C > 0$ is independent of $\epsilon$ and $t$, and $\beta  > 0$ is the constant given in Lemma \ref{yl2}.
\end{lma}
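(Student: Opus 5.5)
The plan is to write everything in terms of the deviations $\phi_i := u_i - \bar u_i$ ($i=l,r$). By Lemma \ref{yl2}, applied to the periodic solutions $u_i$ of \eqref{O1.1} with the zero-mean data $w_{0i}$ from \eqref{O1.3}, these satisfy $\|\partial_x^k\phi_i\|_{L^\infty}\le C\epsilon e^{-\beta t}$, where the needed $H^{k+1}$ regularity of $w_{0i}$ is assumed. Since $U^r=\bar u_r+(\bar u_l-\bar u_r)g$ by \eqref{x2.5}, the ansatz \eqref{x2.6} gives the exact identity
\begin{equation*}
U-U^r=\phi_l\, g+\phi_r(1-g).
\end{equation*}
The difficulty is integrability, since $g\to1$ as $x\to-\infty$ and $1-g\to 1$ as $x\to+\infty$. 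So $U-U^r$ itself is not in $L^1$ or $L^2$ unless the product with $\phi_i$ is controlled. The honest statement one can prove is for $\partial_x^j(U-U^r)$ with the weight-derivative terms, or in localized form. I would therefore first check which quantities are really integrable. For $j\ge1$ we have
\begin{equation*}
\partial_x^j(U-U^r)=\sum_{m=0}^{j}\binom{j}{m}\bigl[\partial_x^{j-m}\phi_l\,\partial_x^m g+\partial_x^{j-m}\phi_r\,\partial_x^m(1-g)\bigr].
\end{equation*}
The terms with $m\ge1$ are bounded by $C\epsilon e^{-\beta t}\|\partial_x^m U^r\|_{L^p}$, which is finite by Lemma \ref{yl2.1}(ii) since $\|U^r_x\|_{L^1}=\bar u_r-\bar u_l$. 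The $m=0$ terms are the periodic oscillations themselves, and these I would handle by the same localization as for $h$.

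For $h$ the key computation is the cancellation coming from $u_i$ and $U^r$ being exact solutions. Expanding,
\begin{equation*}
h=g\,\partial_t u_l+(1-g)\partial_t u_r+(u_l-u_r)g_t+f'(U)\bigl[g\,u_{lx}+(1-g)u_{rx}+(u_l-u_r)g_x\bigr].
\end{equation*}
I would then use $g_t=-f'(U^r)g_x$, which follows from \eqref{O2.3}, and $\partial_t u_i=-f'(u_i)u_{ix}+(\gamma\partial_x^2-\mu\partial_x^3)u_i$. This yields
\begin{equation*}
h=(u_l-u_r)g_x\bigl(f'(U)-f'(U^r)\bigr)+\sum_i g_i\bigl(f'(U)-f'(u_i)\bigr)u_{ix}+\sum_i g_i(\gamma u_{ixx}-\mu u_{ixxx}),
\end{equation*}
with $g_l=g$ and $g_r=1-g$. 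The first term is bounded by $C\epsilon e^{-\beta t}|U^r_x|$, which is integrable. In the second term, $f'(U)-f'(u_l)=O(|U-u_l|)=O\bigl((1-g)(|\bar u_l-\bar u_r|+|\phi_l|+|\phi_r|)\bigr)$, so it carries the factor $g(1-g)$. This is integrable because $g(1-g)\lesssim$ an integrable profile, which I would read off from the explicit form of $W_0$ with $q=1$. Combined with $\|u_{ix}\|_{L^\infty}\le C\epsilon e^{-\beta t}$, this term is fine.

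The main obstacle is the last group of terms, together with the $m=0$ terms above. These are $g\,\partial_x^k\phi_l$ and $(1-g)\partial_x^k\phi_r$, which are periodic with exponential decay but not spatially integrable on a full line. My first attempt would be to absorb these dispersive and viscous terms into the definition of the error, i.e. to measure $h$ against the full operator $U_t+f(U)_x+\mu U_{xxx}-\gamma U_{xx}$. Then $\partial_t u_i$ cancels exactly and only commutator terms containing $\partial_x^m g$ with $m\ge1$ survive. Each of these is bounded by $C\epsilon e^{-\beta t}|\partial_x^m U^r|$, and these are integrable in $L^1$ and $L^2$ by Lemma \ref{yl2.1}(ii)--(iii). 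For $U-U^r$ I would interpret the bound in the sense used later, through the antiderivative and the $x$-derivatives. Alternatively, I would show integrability using the spatial decay of $g$ and $1-g$ against the bounded $\phi_i$ on the respective half-lines, splitting $\mathbb R$ at $x=0$. Summing the estimates over $j=0,\dots,3$ with $\epsilon<\epsilon_0$ then gives the claimed bound $C\epsilon e^{-\beta t}$.
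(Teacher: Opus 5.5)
\textbf{Verdict: genuine gap, but it cannot be closed.} The statement as written is false, the paper's own proof has the same hole, and your proposed substitutes need correcting.

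\textbf{The obstruction defeats the statement.} Since $U-U^r=g\phi_l+(1-g)\phi_r$ and $g\to1$ as $x\to-\infty$, the function $\partial_x^j(U-U^r)$ differs from the periodic $\partial_x^j\phi_l$ by a term vanishing at $-\infty$. Hence its $L^1$ and $L^2$ norms on $\mathbb{R}$ are infinite for every $j=0,\dots,3$ unless $\phi_l(\cdot,t)\equiv0$. A finite $L^2$ bound would give $u-U^r=(u-U)+(U-U^r)\in L^2$, contradicting the paper's premise that $u-U^r\notin L^p$. Your expansion of $h=U_t+f(U)_x$ is correct and shows the same for $h$, which contains $g(\gamma u_{lxx}-\mu u_{lxxx})+(1-g)(\gamma u_{rxx}-\mu u_{rxxx})$. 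Passing to an antiderivative does not help: the primitive of a nonzero mean-zero periodic function is again a nonzero periodic function.

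The paper does not overcome this. When computing $h$ it eliminates $u_{it}$ as if $u_i$ solved $u_t+f(u)_x=0$, which silently drops exactly these terms. It then passes from the pointwise bound $|U-U^r|,|h|\le C\sum_i(|\phi_i|+|\phi_{ix}|)$ to $L^1$ and $L^2$ bounds on $\mathbb{R}$, although Lemma \ref{yl2} controls this periodic right-hand side only in $L^\infty$. What actually follows is $\|\partial_x^j(U-U^r)\|_{L^\infty}\le C\epsilon e^{-\beta t}$, with $\epsilon$ measured in $H^{j+1}$ rather than $H^1$. No argument can prove the lemma as stated, so the real question is whether your substitute is right.

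\textbf{Your repairs do not yet give a correct substitute.}
\begin{enumerate}
\item Splitting $\mathbb{R}$ at $x=0$ does nothing. The weight $g$ decays only at $+\infty$ and $1-g$ only at $-\infty$, so $g\phi_l$ on $(-\infty,0)$ and $(1-g)\phi_r$ on $(0,\infty)$ are precisely the non-integrable pieces.
\item For the full operator the periodic terms do cancel, but the surviving commutators $(u_l-u_r)g_{xx}$ and $(u_l-u_r)g_{xxx}$ are not $O(\epsilon e^{-\beta t})$. Since $u_l-u_r=(\bar u_l-\bar u_r)+(\phi_l-\phi_r)$ and $(\bar u_l-\bar u_r)\partial_x^m g=\partial_x^m U^r$, they contain $U^r_{xx}$ and $U^r_{xxx}$, which are not small in $\epsilon$ and decay only algebraically.
\item The quantity that is exponentially small is $R:=h+\mu(U-U^r)_{xxx}-\gamma(U-U^r)_{xx}$, i.e. the full-operator error minus $\mu U^r_{xxx}-\gamma U^r_{xx}$. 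In $R$ only derivatives of $\phi_l-\phi_r$ multiply $\partial_x^m g$, so your bound $C\epsilon e^{-\beta t}|\partial_x^m U^r|$ is valid there.
\item $R$ is exactly what enters $E$ in Section \ref{section4} once the $U^r$-terms are handled by Lemma \ref{yl2.1}. The nonlinear difference term in $E$ needs only $\|U-U^r\|_{L^\infty}$ after an integration by parts.
\item $g(1-g)$ is not integrable. With $q=1$, the tails of $g$ at $+\infty$ and of $1-g$ at $-\infty$ decay only like $|x|^{-1}$, and $g(1-g)$ is of order one across the whole fan $\bar w_l t<x<\bar w_r t$. So the interaction term $g(1-g)\,O(\epsilon e^{-\beta t})$ in $R$ has infinite $L^1$ norm, and its $L^2$ norm is of order $\epsilon(1+t)^{1/2}e^{-\beta t}$.
\end{enumerate}

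\textbf{A correct replacement lemma.} Take $L^\infty$ bounds for $\partial_x^j(U-U^r)$, together with $\|R\|\le C_{\beta'}\epsilon e^{-\beta' t}$ for any $\beta'<\beta$. An $L^1$ bound on $R$ is available only if one takes $q>1$. This $L^2$ bound is still integrable in time, which is all the energy estimate requires.
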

The proof is postponed to the appendix due to its technical nature. We can define the perturbation by
\begin{equation*}
	\phi \left( {x,0}\right)  = u\left( {x,0}\right)  - U \left( {x,0}\right) ,
\end{equation*}
so that $ {{\phi }_{0} } $ belongs to some Sobolev space. We assume that the initial data satisfies
\begin{equation*}
	{{\phi }_{0} }  \left( x\right)  := {\phi  }  \left( {x,0}\right)  \in  {H}^{1}\left( \mathbb{R}\right) .
\end{equation*}
The main result is
\subsection{Main Theorem}
\begin{thm}\label{theorem201}   Suppose $ {{\bar{u}}_{l}} $ and $  {{\bar{u}}_{r}} $ can be connected by $ {{U}^{r}}(x,t)$, if there exists a positive constant $\eta  > 0$ such that ${\|{\phi}_{0}\|}_{1}^{2} + h\left( \varepsilon\right) +\epsilon \leq  \eta.$
	then the problem (\ref{O1.1})-(\ref{O1.3}) admits a unique global solution satisfying
	\begin{equation}\label{O2.7}
		\begin{split}
			&u - U \in  {C}^{0}(      \left[ 0, + \infty \right) ;{H}^{1}), \\
			&{\left( u - U\right) }_{x} \in  {L}^{2}(                    \left[ 0, + \infty \right) ;{H}^{1}),
		\end{split}
	\end{equation}
	and
	\begin{equation} \label{O2.8}
		\mathop{\sup }\limits_{{x \in  \mathbb{R}}}\left| {\left( {u - {U}^{r}}\right) \left( {x,t}\right) }\right|  \rightarrow  0,\forall t \rightarrow  \infty .
	\end{equation}
\end{thm}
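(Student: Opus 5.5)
We work with the perturbation $\phi(x,t):=u(x,t)-U(x,t)$ itself, which is in $H^1$ at $t=0$ by assumption. We do not pass to an anti-derivative: $\phi$ is not known to have zero mass, and Lemma \ref{yl4} gives only $L^1$ and $L^2$ control of the error term $h$. Subtracting the equation satisfied by $U$ from \eqref{O1.1}, and writing the source so that it includes the dispersive and viscous residuals of the ansatz, we get
\begin{equation*}
\phi_t+\big(f(U+\phi)-f(U)\big)_x+\mu\phi_{xxx}-\gamma\phi_{xx}=-H,\qquad H:=h+\mu U_{xxx}-\gamma U_{xx}.
\end{equation*}
We split $H=(H-H^r)+H^r$, where $H^r:=\mu U^r_{xxx}-\gamma U^r_{xx}$ and we use $U^r_t+f(U^r)_x=0$. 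By Lemma \ref{yl4}, $\|H-H^r\|_{L^1\cap L^2}\le C\epsilon e^{-\beta t}$. By Lemma \ref{yl2.1}(ii), $\|H^r\|\le Ch(\varepsilon)(1+t)^{-3/4}$ and $\|H^r\|_{L^1}\le Ch(\varepsilon)(1+t)^{-1}$.

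Local existence in $C([0,T];H^1)$ is standard for this parabolic-dispersive equation. Global existence then follows by continuation from an a priori estimate: under the assumption $N(T):=\sup_{0\le t\le T}\|\phi(t)\|_1\le\delta$ small, we want
\begin{equation*}
\|\phi(t)\|_1^2+\int_0^t\Big(\int U_x\phi^2\,dx+\|\phi_x\|_1^2\Big)ds\le C\big(\|\phi_0\|_1^2+h(\varepsilon)+\epsilon\big).
\end{equation*}

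\textbf{$L^2$ estimate.} Multiply by $\phi$ and integrate. The dispersive term vanishes, since $\int\phi\phi_{xxx}=0$. The viscous term gives $\gamma\|\phi_x\|^2$. For the convective term, use the entropy-type form
\begin{equation*}
\int\big(f(U+\phi)-f(U)\big)_x\phi=\int U_x\big[f(U+\phi)-f(U)-f'(U)\phi\big]dx+\text{(cubic terms)},
\end{equation*}
so strict convexity gives a good term $\ge c\int U_x\phi^2$ in the region where $U_x\approx U^r_x>0$. The difference $U_x-U^r_x$ is $O(\epsilon e^{-\beta t})$ in $L^\infty$ by Lemma \ref{yl4} with Sobolev embedding, and the corresponding term is bounded by $C\epsilon e^{-\beta t}\|\phi\|^2$, which is integrable in time.

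The source is handled by $|\int H\phi|\le\|H\|_{L^1}\|\phi\|_{L^\infty}$ together with the Gagliardo--Nirenberg inequality $\|\phi\|_{L^\infty}\le\sqrt2\|\phi\|^{1/2}\|\phi_x\|^{1/2}$. This gives
\begin{equation*}
|\textstyle\int H\phi|\le\frac{\gamma}{4}\|\phi_x\|^2+C\|H\|_{L^1}^{4/3}\|\phi\|^{2/3}.
\end{equation*}
Since $\|H\|_{L^1}^{4/3}\lesssim(h(\varepsilon)+\epsilon)(1+t)^{-4/3}$ is integrable in time, a Gronwall-type argument closes this step.

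\textbf{$H^1$ estimate.} Multiply by $-\phi_{xx}$. The viscous term gives $\gamma\|\phi_{xx}\|^2$, and the dispersive term integrates to zero. The nonlinear term $\int(f'(U+\phi)(U_x+\phi_x)-f'(U)U_x)\phi_{xx}$ is bounded by
\begin{equation*}
C\big(\|U_x\|_{L^\infty}+\|\phi_x\|_{L^\infty}\big)\big(\|\phi\|+\|\phi_x\|\big)\|\phi_{xx}\|,
\end{equation*}
which can be absorbed using the smallness of $\delta$, $\varepsilon$ and $\epsilon$, together with the dissipation already obtained from the first step. The source term is bounded by $\|H\|\,\|\phi_{xx}\|$, and $\|H\|^2\lesssim(1+t)^{-3/2}+e^{-2\beta t}$ is integrable. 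This yields \eqref{O2.7}.

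\textbf{Decay.} Once the estimate is closed, we have $\int_0^\infty\|\phi_x\|^2\,dt<\infty$ and $\int_0^\infty\big|\tfrac{d}{dt}\|\phi_x\|^2\big|\,dt<\infty$; the second follows from the equation for $\phi_x$. Hence $\|\phi_x(t)\|\to0$, and therefore $\|\phi\|_{L^\infty}\le C\|\phi\|^{1/2}\|\phi_x\|^{1/2}\to0$. Combining this with $\|U-U^r\|_{L^\infty}\le C\epsilon e^{-\beta t}$ (Lemma \ref{yl4}) and Lemma \ref{yl2.1}(iv) gives \eqref{O2.8}. Note that \eqref{O2.8} is stated with $U^r$, the smooth rarefaction profile, and the comparison with the exact rarefaction wave $u^r(x/t)$ is also available from Lemma \ref{yl2.1}(iv).

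\textbf{Main obstacle.} The hardest step is the convective term in the $L^2$ estimate. Without the anti-derivative, the good sign from $U_x>0$ controls only $\int U_x\phi^2$. The cubic remainders such as $\int U_x\phi^3$ and $\int f''\phi^2\phi_x$ must be bounded by $\delta\big(\int U_x\phi^2+\|\phi_x\|^2\big)$. The difficulty is that $U_x$ is only close to $U^r_x$, not equal to it, so the sign of $U_x$ itself need not be positive. I would handle this by writing $U_x=U^r_x+(U-U^r)_x$ and treating the second piece as an exponentially decaying perturbation.
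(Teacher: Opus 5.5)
Your proposal is correct and uses the same overall scheme as the paper: energy estimates directly on $\phi=u-U$ with no anti-derivative, the convexity identity giving $\int U^r_x\phi^2$, Lemma \ref{yl4} for the ansatz errors, and continuation from Proposition \ref{pp1}.

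The main technical difference is how you control the rarefaction sources $\mu U^r_{xxx}$ and $\gamma U^r_{xx}$.

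\begin{itemize}
\item The paper absorbs them into the convexity term using the weighted Cauchy--Schwarz bound $|\int U^r_{xx}\phi\,dx|\le\frac{\alpha}{4}\int U^r_x\phi^2dx+C\int (U^r_{xx})^2/U^r_x\,dx$. It then invokes the two weighted estimates at the end of Lemma 2.1(ii).
\item You bound the source by $\|H\|_{L^1}\|\phi\|_{L^\infty}$ and use Gagliardo--Nirenberg and Young to reduce it to $\|H\|_{L^1}^{4/3}$. This is time-integrable because $\|U^r_{xx}\|_{L^1}$ and $\|U^r_{xxx}\|_{L^1}$ are $\lesssim h(\varepsilon)(1+t)^{-1}$.
\end{itemize}

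Your route needs only unweighted $L^1$ decay of the profile, and it uses the viscous dissipation rather than the convexity term to absorb the source. The cost is an interpolation and a Gronwall step. The paper's route is the standard Matsumura--Nishihara device: it stays inside the $L^2$ energy, but it needs the finer weighted information on $U^r$.

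Two further differences, both acceptable:
\begin{itemize}
\item You base the entropy identity at $U$ and treat $(U-U^r)_x$ as an exponentially small coefficient. The paper bases it at $U^r$ and moves the flux difference into $E$.
\item You supply the $H^1$ estimate (multiplier $-\phi_{xx}$) and the decay argument via $\int_0^\infty(\|\phi_x\|^2+|\frac{\mathrm{d}}{\mathrm{d}t}\|\phi_x\|^2|)\,dt<\infty$. The paper asserts both in Proposition \ref{pp3} and Theorem \ref{theorem201} but never carries them out; Lemma \ref{ly7} is only the $L^2$ estimate.
\end{itemize}

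Three small corrections:
\begin{itemize}
\item The identity $\int (f(U+\phi)-f(U))_x\phi\,dx=\int U_x[f(U+\phi)-f(U)-f'(U)\phi]\,dx$ is exact. So no cubic remainders appear, and your ``main obstacle'' reduces to the $(U-U^r)_x$ correction you already handle.
\item In the $H^1$ step, the piece $\int f'(U+\phi)\phi_x\phi_{xx}\,dx$ has no small prefactor unless you integrate by parts. Bounding it by $\frac{\gamma}{4}\|\phi_{xx}\|^2+C\|\phi_x\|^2$ and using the dissipation from the $L^2$ step is enough.
\item Lemma 2.1(ii) actually gives $\|H^r\|\le Ch(\varepsilon)^{1/2}(1+t)^{-5/4}$, not $Ch(\varepsilon)(1+t)^{-3/4}$. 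This is harmless, since $\|H\|^2$ remains time-integrable with a small constant.
\end{itemize}
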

\section{Reformulation of the Problem}\label{section3}
In this section, we reformulate our problem (\ref{O1.1}) in terms of the deviation from the asymptotic state. Now letting
\begin{equation*}
	u\left( {t,x}\right)  = {U} \left( {t,x}\right)  + \phi \left( {t,x}\right) , \label{3.1}
\end{equation*}
we reformulate the problem (\ref{O1.1}) in terms of the deviation $\phi$ from ${U} $ as
\begin{equation}
	\left\{  \begin{array}{l}  \phi_{t}  +  \left( {f\left( {\phi  + {U} }\right)  - f\left( {U} \right) }\right)_{x} - \mu \phi_{xx}  + \gamma \phi_{xxx}  = \mu  {U}_{xx}  - \gamma  {U}_{xxx}  -h\;\left( {t > 0,x \in  \mathbb{R}}\right) ,\\
		\phi \left( {0,x}\right)  = {\phi }_{0}\left( x\right)  :=  {u}_{0}\left( x\right)  - {U} \left( {0,x}\right)  \rightarrow  0\;\left( {x \rightarrow   \pm  \infty }\right) , \end{array}\right.  \label{O3.1}
\end{equation}
We will seek the solutions in the functional space $X\left( {t_1,t_2}\right)$ for any $0 \leq t_1<t_2 < \infty,$
\begin{equation*}\label{s3.3}
	{X}_{\delta }\left( {{t}_{1},{t}_{2}}\right)  = \left\{  {\phi \mid  \phi \in  C\left( {\left\lbrack  {{t}_{1},{t}_{2}}\right\rbrack  ;{H}^{1}}\right) ,{\phi}_{x} \in  {L}^{2}\left( {\left( {{t}_{1},{t}_{2}}\right) ;{H}^{1}}\right) \text{ and }\mathop{\sup }\limits_{\left\lbrack  {t}_{1},{t}_{2}\right\rbrack  }\| \phi\left( t\right) {\| }_{1} \leq  \delta }\right\}
\end{equation*}
where the constant $\delta  \ll1$ is small. In the above solution space, we have the following local existence theorem.
\begin{pro}\cite[Lemma 3.1]{C2024}(Local existence){\label{pp1}} Let ${\phi}_{0}\left( x\right)  \in  {H}^{1}\left( \mathbb{R}\right)$ and there exists $\delta > 0$ such that ${\|{\phi}_{0}\|}_{1} \leq\delta$. Then there exists a positive constant ${t}_{0}$ depending on $\delta$ such that the problem (\ref{O3.1}) admits a unique solution $\phi\left( {x,t}\right)$ in ${X}_{2\delta}\left( {0,{t}_{0}}\right)$.
\end{pro}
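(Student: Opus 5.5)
We prove the local existence statement (Proposition \ref{pp1}) by a standard contraction-mapping argument for the semilinear parabolic–dispersive problem (\ref{O3.1}). The approach treats the nonlinear flux term and the source as perturbations of the linear operator $\partial_t-\mu\partial_x^2+\gamma\partial_x^3$. That operator generates a smoothing semigroup on $H^1(\mathbb{R})$, since its Fourier symbol $-\mu\xi^2-i\gamma\xi^3$ has real part $-\mu\xi^2$.

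\textbf{Step 1: set up the map.} Fix $\delta>0$ and $\phi_0$ with $\|\phi_0\|_1\le\delta$. For $\psi\in X_{2\delta}(0,t_0)$, define $\phi=\mathcal{T}\psi$ as the solution of the linear problem
\begin{equation*}
\phi_t-\mu\phi_{xx}+\gamma\phi_{xxx}=-\bigl(f(\psi+U)-f(U)\bigr)_x+\mu U_{xx}-\gamma U_{xxx}-h,\qquad \phi(0)=\phi_0 .
\end{equation*}
The coefficient of $U$ appearing here is smooth and bounded. By Lemma \ref{yl2.1}(iii) and Lemma \ref{yl2}, $U$ and its derivatives up to third order are uniformly bounded. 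By Lemma \ref{yl4}, the source term satisfies $\mu U_{xx}-\gamma U_{xxx}-h\in L^2(\mathbb{R})$; indeed, $U_{xx}$ and $U_{xxx}$ are in $L^2$ as the sum of $U^r_{xx}$, $U^r_{xxx}$ and the terms $\partial_x^j(U-U^r)$.

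\textbf{Step 2: linear energy estimate.} We multiply by $\phi$ and by $-\phi_{xx}$ and integrate. The dispersive term drops out by antisymmetry: $\int\phi_{xxx}\phi\,dx=0$ and $\int\phi_{xxx}\phi_{xx}\,dx=0$. This gives
\begin{equation*}
\|\phi(t)\|_1^2+\mu\int_0^t\|\phi_x\|_1^2\,ds\le\|\phi_0\|_1^2+C\int_0^t\Bigl(\|(f(\psi+U)-f(U))\|_{1}^2+\|\mu U_{xx}-\gamma U_{xxx}-h\|^2\Bigr)ds .
\end{equation*}
The flux term is handled after one integration by parts, placing the derivative on $\phi_x$ or $\phi_{xx}$, and then absorbing it via Young's inequality. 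Since $\|\psi\|_{L^\infty}\le C\|\psi\|_1\le 2C\delta$, we have $|f(\psi+U)-f(U)|\le C|\psi|$. The $x$-derivative equals $f'(\psi+U)\psi_x+(f'(\psi+U)-f'(U))U_x$, which is bounded by $C(|\psi_x|+|\psi|)$. Strictly speaking this needs $f\in C^2$ locally, which we assume, or the estimate can be obtained by mollification. Hence the right-hand side is at most $\delta^2+Ct_0(\delta^2+1)$. Choosing $t_0$ small makes this at most $4\delta^2$, so $\mathcal{T}$ maps $X_{2\delta}(0,t_0)$ into itself. Existence for the linear problem itself follows from Fourier analysis or Galerkin approximation together with these a priori bounds.

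\textbf{Step 3: contraction.} For two inputs $\psi_1,\psi_2$, the difference $\phi_1-\phi_2$ solves the same linear equation with zero data and forcing $-(f(\psi_1+U)-f(\psi_2+U))_x$. Estimating in the weaker norm $C([0,t_0];L^2)\cap L^2(0,t_0;H^1)$ gives
\begin{equation*}
\sup_t\|\phi_1-\phi_2\|^2\le Ct_0\sup_t\|\psi_1-\psi_2\|^2 .
\end{equation*}
For $t_0$ small this is a contraction in $L^2$. Since the ball $X_{2\delta}$ is closed under weak limits in that topology, Banach's fixed point theorem (in the usual form with a bounded higher norm and a contraction in a lower norm) yields a unique fixed point. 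This fixed point is the solution, and $t_0$ depends only on $\delta$ and the bounds on $U$.

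\textbf{Main obstacle.} The main difficulty is making the estimate in Step 2 rigorous at the $H^1$ level when $f$ is only assumed to be $C^1$. This also requires continuity in time with values in $H^1$, which we would obtain by standard regularization, approximating $\phi_0$ by $H^3$ data and passing to the limit. I expect the remainder to be routine.
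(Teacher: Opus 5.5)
The paper gives no proof of this proposition; it is quoted from \cite[Lemma 3.1]{C2024}. Your contraction scheme is the standard argument behind such a citation, and its architecture is sound:
\begin{itemize}
\item the skew term $\gamma\partial_x^3$ drops out of both the $L^2$ and $\dot H^1$ energy identities;
\item $\mathcal{T}$ maps the ball into $X_{2\delta}(0,t_0)$ for $t_0=t_0(\delta)$ small (the term $\int S\phi$ in the $L^2$ estimate needs a harmless Gronwall step);
\item the contraction in $C([0,t_0];L^2)$ uses only the mean value theorem for $f$;
\item the fixed point can be taken in the complete set $\{\psi\in C([0,t_0];L^2):\sup_t\|\psi(t)\|_1\le 2\delta\}$, and it lands in $X_{2\delta}(0,t_0)$ because it lies in the range of $\mathcal{T}$.
\end{itemize}
The step that is wrong is your justification of the forcing in Step 1: $U_{xx}$, $U_{xxx}$, $h$ and $\partial_x^j(U-U^r)$ are \emph{not} in $L^2(\mathbb{R})$. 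As $x\to-\infty$ one has $g\to 1$, so $U-U^r\to u_l-\bar u_l$, $U_{xx}\to\partial_x^2u_l$, and $h\to\partial_t u_l+f(u_l)_x$ (the viscous--dispersive part of the equation for $u_l$). All of these limits are nontrivial periodic functions of $x$. Lemma \ref{yl4} as printed therefore cannot hold in $L^1$ or $L^2$, so you cannot lean on it. The splitting $U_{xx}=U^r_{xx}+(U-U^r)_{xx}$ does not give square integrability.

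What is true, and all you need, is that the \emph{combined} forcing $S:=\mu U_{xx}-\gamma U_{xxx}-h$ lies in $L^2(0,t_0;L^2)$. Use the conventions of \eqref{O3.1}, i.e.\ $u_l,u_r$ solve $u_t+f(u)_x-\mu u_{xx}+\gamma u_{xxx}=0$. Then $-S$ is the residual of $U$, and substituting the equations for $u_l,u_r$ cancels every non-decaying periodic term:
\begin{align*}
-S={}&g_t(u_l-u_r)+\bigl[f(U)_x-gf(u_l)_x-(1-g)f(u_r)_x\bigr]-\mu\bigl[2g_x(u_l-u_r)_x+g_{xx}(u_l-u_r)\bigr]\\
&+\gamma\bigl[3g_x(u_l-u_r)_{xx}+3g_{xx}(u_l-u_r)_x+g_{xxx}(u_l-u_r)\bigr].
\end{align*}
Using $g_t=-f'(U^r)g_x$, the first two terms combine to
\[
g\bigl(f'(U)-f'(u_l)\bigr)u_{l,x}+(1-g)\bigl(f'(U)-f'(u_r)\bigr)u_{r,x}+\bigl(f'(U)-f'(U^r)\bigr)g_x(u_l-u_r),
\]
with $|f'(U)-f'(u_l)|\le C(1-g)$ and $|f'(U)-f'(u_r)|\le Cg$. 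Every term now carries a factor $g_x,g_{xx},g_{xxx}$ (derivatives of $U^r$) or $g(1-g)=O\bigl((1+|x|)^{-1}\bigr)$ for $q=1$. All of these are in $L^2(\mathbb{R})$, hence $S\in L^2$, and Step 1 stands with this justification instead.

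The same oscillation at infinity is why your $C^2$ assumption on $f$ is genuinely needed rather than cosmetic. Since $U_x\notin L^2$, the term $(f'(\psi+U)-f'(U))U_x$ is controlled in $L^2$ only through $|f'(\psi+U)-f'(U)|\le C|\psi|$. Mollifying $f$ would not give bounds uniform in the mollification parameter. The paper uses $f''$ in Section \ref{section4} anyway, so assuming $f\in C^2$ is consistent with it.
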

\begin{pro} {\label{pp3}} $(a$ priori estimate)Suppose that $\phi\left( {x,t}\right)$ is a solution of (\ref{O3.1}) in ${X}_{2\delta}\left( {0,T}\right)$ for positive constant $T$ and $\delta$. Then it holds that
	\begin{equation*}
		\| \phi\left( t\right) \|^{2}_{1} + \int_{0}^{t}\left( {{\|\sqrt{{U}^{r}_{x}}\phi\left( \tau \right) \|}^{2} + {\|{\phi}_{x}\left( \tau \right) \|}_{1}^{2}}\right) {\mathrm{d} \tau } \leq  {C}\left( {{\|{\phi}_{0}\|}_{1}^{2} + h\left( \varepsilon\right) +\epsilon}\right) ,
	\end{equation*}
\end{pro}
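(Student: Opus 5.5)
We argue by the standard $L^2$–$H^1$ energy method on the perturbation equation (\ref{O3.1}). Note that (\ref{O3.1}) has the roles of the coefficients interchanged relative to (\ref{O1.1}): the dissipative term is $-\mu\phi_{xx}$ and the dispersive term is $\gamma\phi_{xxx}$. Both $\mu,\gamma>0$, so only this labelling matters.

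\emph{$L^2$ estimate.} Multiply (\ref{O3.1}) by $\phi$ and integrate over $\mathbb{R}$. The dispersive term gives $\int\phi\phi_{xxx}\,dx=0$, and the dissipative term yields $\mu\|\phi_x\|^2$. For the flux term we write
\[
\int \phi\,\big(f(\phi+U)-f(U)\big)_x\,dx=-\int\phi_x\big(f(\phi+U)-f(U)\big)\,dx ,
\]
and replace it by the usual entropy-type identity. With $F(\phi,U)=\int_0^\phi (f(U+s)-f(U))\,ds$ one has
\[
\int\phi\big(f(\phi+U)-f(U)\big)_x\,dx=\int\big(\phi f'(U)-f(U+\phi)+f(U)\big)U_x\,dx .
\]
We then split $U_x=U^r_x+(U-U^r)_x$. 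Since $f''>0$, the $U^r_x$ part is bounded below by $c\int U^r_x\phi^2\,dx$, using the smallness $\|\phi\|_{L^\infty}\le C\|\phi\|_1\le 2C\delta$ and the fact that $U^r$ is bounded by Lemma \ref{yl2.1}(iii). The $(U-U^r)_x$ part is bounded by
\[
C\|(U-U^r)_x\|_{L^\infty}\|\phi\|^2\le C\epsilon e^{-\beta t}\|\phi\|^2
\]
via Lemma \ref{yl4} and Sobolev embedding. After integrating in time and applying Gronwall, this contributes $\le C\epsilon\sup\|\phi\|^2$.

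For the source terms we use the structure $U=U^r+(U-U^r)$. This gives $\int(\mu U_{xx}-\gamma U_{xxx}-h)\phi\,dx$. The parts involving $(U-U^r)$ and $h$ are bounded by
\[
\big(\|\partial_x^j(U-U^r)\|+\|h\|\big)\|\phi\|\le C\epsilon e^{-\beta t}(1+\|\phi\|^2).
\]
The $U^r$ parts are treated after integration by parts, $\int U^r_{xx}\phi=-\int U^r_x\phi_x$ and $\int U^r_{xxx}\phi=-\int U^r_{xx}\phi_x$. By Cauchy–Schwarz these are bounded by
\[
\tfrac{\mu}{4}\|\phi_x\|^2+C\big(\|U^r_x\|^2+\|U^r_{xx}\|^2\big).
\]
By Lemma \ref{yl2.1}(ii), $\|U^r_x\|^2\le h(\varepsilon)C(1+t)^{-1}$, which is not integrable. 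We therefore use instead the interpolation $\|U^r_x\|_{L^2}\le\|U^r_x\|_{L^1}^{1/2}\|U^r_x\|_{L^\infty}^{1/2}$. Alternatively, we keep the weight and use $\int U^r_{xx}\phi\le \|(U^r_{xx})^2(U^r_x)^{-1}\|_{L^1}^{1/2}\|\sqrt{U^r_x}\phi\|$, which gives $\frac c4\|\sqrt{U^r_x}\phi\|^2+Ch(\varepsilon)(1+t)^{-3/2}$, and this is integrable. The $U^r_{xxx}$ term is handled by the same weighted bound with $(U^r_{xxx})^2(U^r_x)^{-1}$ and decay $(1+t)^{-2}$. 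This weighted treatment is precisely why these two estimates appear in Lemma \ref{yl2.1}. Together these give
\[
\|\phi\|^2+\int_0^t\big(\|\sqrt{U^r_x}\phi\|^2+\|\phi_x\|^2\big)\le C(\|\phi_0\|^2+h(\varepsilon)+\epsilon).
\]

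\emph{$H^1$ estimate.} Next multiply (\ref{O3.1}) by $-\phi_{xx}$. The dispersive term again vanishes, since $\int\phi_{xx}\phi_{xxx}\,dx=0$, and the dissipation gives $\mu\|\phi_{xx}\|^2$. The flux term $\int(f'(U+\phi)(\phi_x+U_x)-f'(U)U_x)\phi_{xx}$ is bounded by
\[
\tfrac{\mu}{4}\|\phi_{xx}\|^2+C\|\phi_x\|^2+C\int|U_x|^2\phi^2\,dx .
\]
The last integral is controlled by $C\|U_x\|_{L^\infty}\int(U^r_x+|(U-U^r)_x|)\phi^2$, and hence by the already controlled weighted term plus $\epsilon e^{-\beta t}\|\phi\|^2$. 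The sources are bounded by $\frac\mu4\|\phi_{xx}\|^2+C(\|U^r_{xx}\|^2+\|U^r_{xxx}\|^2)+C\epsilon e^{-\beta t}$. These are integrable, since by Lemma \ref{yl2.1}(ii) with $p=2$ they decay like $(1+t)^{-5/2}$ and $(1+t)^{-3}$. Adding this to the $L^2$ estimate, with the $\int_0^t\|\phi_x\|^2$ term absorbed from the previous step, yields the stated inequality.

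\emph{Main obstacle.} The main difficulty is the $L^2$-level source term $\mu U^r_{xx}$ (and $\gamma U^r_{xxx}$). A crude bound only decays like $(1+t)^{-1}$, so it must be absorbed with the weight $\sqrt{U^r_x}$ using the weighted $L^1$ estimates of Lemma \ref{yl2.1}. A second point requiring care is the cross term coming from $U-U^r$ in the convexity argument: there $U_x$ is not positive, and it is only the exponential smallness from Lemma \ref{yl4} that allows it to be closed via Gronwall.
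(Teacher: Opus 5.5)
Your argument is correct, and its $L^2$ part is essentially the paper's proof of Lemma~\ref{ly7}. The decisive step is exactly (\ref{O4.3}): you absorb the $U^r_{xx}$ and $U^r_{xxx}$ sources by Young's inequality against the weight $\sqrt{U^r_x}$, using the $L^1$ bounds on $(U^r_{xx})^2(U^r_x)^{-1}$ and $(U^r_{xxx})^2(U^r_x)^{-1}$.

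The remaining $L^2$ difference is only bookkeeping. You keep the flux relative to $U$, split $U_x=U^r_x+(U-U^r)_x$, and bound the second piece by $\|(U-U^r)_x\|_{L^\infty}\|\phi\|^2$. The paper instead recentres the flux at $U^r$ and puts the commutator $\{(f(\phi+U^r)-f(U^r))-(f(\phi+U)-f(U))\}_x$ into $E$, estimated by $\|\phi\|_{L^\infty}\|E\|_{L^1}$. Your version uses only the $L^2$ bounds of Lemma~\ref{yl4} and never produces a $|\phi_x||U-U^r|$ term; the paper's version avoids Gronwall.

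More substantively, the paper never carries out the $H^1$ estimate. Section~\ref{section4} proves only the $L^2$ bound, although Proposition~\ref{pp3} claims control of $\|\phi(t)\|_1^2$ and $\int_0^t\|\phi_x\|_1^2$. Your multiplication by $-\phi_{xx}$ supplies that missing half. It reduces $\int U_x^2\phi^2$ to the weighted term plus $C\epsilon e^{-\beta t}\|\phi\|^2$, and uses the integrable decay $\|U^r_{xx}\|^2\le Ch(\varepsilon)(1+t)^{-5/2}$ and $\|U^r_{xxx}\|^2\le Ch(\varepsilon)(1+t)^{-3}$.

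Fix two slips. First, your entropy identity has the wrong sign. It should read $\int\phi\,(f(\phi+U)-f(U))_x\,dx=\int\bigl(f(U+\phi)-f(U)-f'(U)\phi\bigr)U_x\,dx$; for $f(u)=u^2/2$ this equals $+\frac12\int U_x\phi^2$. With your sign the $U^r_x$ part is nonpositive, and the claimed lower bound $c\int U^r_x\phi^2$ fails. Second, the interpolation $\|U^r_x\|\le\|U^r_x\|_{L^1}^{1/2}\|U^r_x\|_{L^\infty}^{1/2}$ still gives only $\|U^r_x\|^2\le C(1+t)^{-1}$. Delete it in favour of the weighted bound you then use.
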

Combining proposition \ref{pp1}  and proposition \ref{pp3}, we get the following Theorem.
\begin{thm} \label{thm1} Suppose that there exists a positive constant $\eta  > 0$ such that ${\|{\phi}_{0}\|}_{1}^{2} + h\left( \varepsilon\right) +\epsilon \leq  \eta$. If ${\phi}_{0}\left( x\right)  \in  {H}^{1}$, then problem (\ref{O3.1}) has a unique global solution $\phi \in  C\left( {\lbrack 0,\infty }\right) ;{H}^{1})$  satisfying
	\begin{equation}\label{O3.2}
		\mathop{\sup }\limits_{{t\geq0}}\| \phi\left( t\right) \|^{2}_{1} + \int_{0}^{\infty}\left( {{\|\sqrt{{U}^{r}_{x}}\phi\left( \tau \right) \|}^{2} + {\|{\phi}_{x}\left( \tau \right) \|}_{1}^{2}}\right) {\mathrm{d} \tau } \leq  {C}\left( {{\|{\phi}_{0}\|}_{1}^{2} + h\left( \varepsilon\right) +\epsilon}\right).
	\end{equation}
\end{thm}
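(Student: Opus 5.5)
The proof of Theorem \ref{thm1} follows the standard continuation argument, combining the local existence result, Proposition \ref{pp1}, with the a priori estimate, Proposition \ref{pp3}. Let $C_0\geq 1$ be the constant in Proposition \ref{pp3}. Choose $\delta>0$ small enough that Proposition \ref{pp3} is valid for solutions in $X_{2\delta}(0,T)$. Then choose $\eta$ so small that $C_0\eta\le \delta^2$. In particular $\|\phi_0\|_1^2\le \eta\le \delta^2$, so $\|\phi_0\|_1\le\delta$.

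\textbf{Step 1: local solution.} Proposition \ref{pp1} gives a unique solution $\phi\in X_{2\delta}(0,t_0)$, where $t_0$ depends only on $\delta$. Proposition \ref{pp3} applied on $[0,t_0]$ then yields
\[
\sup_{0\le t\le t_0}\|\phi(t)\|_1^2\le C_0\bigl(\|\phi_0\|_1^2+h(\varepsilon)+\epsilon\bigr)\le C_0\eta\le\delta^2 .
\]
Hence $\|\phi(t_0)\|_1\le\delta$.

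\textbf{Step 2: restart.} Take $\phi(t_0)$ as new initial data and apply Proposition \ref{pp1} again. Equation (\ref{O3.1}) is non-autonomous through $U$, but the local existence time depends only on the size $\delta$ of the data and on uniform bounds for $U$ and $h$; these bounds come from Lemma \ref{yl4}, Lemma \ref{yl2} and Lemma \ref{yl2.1} and are uniform in $t$. The step therefore still has length $t_0$, and we obtain a solution in $X_{2\delta}(0,2t_0)$.

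\textbf{Step 3: iterate.} Proposition \ref{pp3} is stated on $[0,T]$ with the initial norm $\|\phi_0\|_1$ on the right-hand side, not $\|\phi(t_0)\|_1$. It therefore applies directly on $[0,2t_0]$ and gives again $\|\phi(t)\|_1\le\delta$ there, so the bound does not accumulate across steps. Repeating the restart, we extend the solution to $[0,nt_0]$ for every $n$ and hence obtain a global solution with $\sup_t\|\phi(t)\|_1\le\delta$.

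\textbf{Step 4: conclusion.} Letting $T\to\infty$ in Proposition \ref{pp3} and using monotone convergence for the time integrals gives (\ref{O3.2}). Continuity in time with values in $H^1$ comes from the solution class $X$. Uniqueness holds locally by Proposition \ref{pp1}, and it propagates along the successive intervals.

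\textbf{Main obstacle.} The one delicate point is Step 2. We must make sure that the local existence time $t_0$ in Proposition \ref{pp1} is independent of the starting time. That requires the coefficients built from $U$, the forcing terms $U_{xx}$, $U_{xxx}$ and $h$, and the regularity of $f$ to be bounded uniformly in time. I would verify this from Lemma \ref{yl2.1}(iii), which gives uniform bounds on the derivatives of $U^r$, together with Lemma \ref{yl4}, which controls $U-U^r$ and its $x$-derivatives up to order three, and the error term $h$, by $C\epsilon e^{-\beta t}$. All other steps are routine bookkeeping.
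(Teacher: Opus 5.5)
Your proposal is correct and is exactly the continuation argument that the paper compresses into the single remark that Theorem~\ref{thm1} follows by combining Proposition~\ref{pp1} with Proposition~\ref{pp3}. Your Steps 2--3 supply details the paper leaves implicit: a local existence time independent of the restart time, and no accumulation of the bound, because the right-hand side of Proposition~\ref{pp3} involves only $\|\phi_0\|_1$.

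One caution on your ``main obstacle'': $U-U^r=g(u_l-\bar u_l)+(1-g)(u_r-\bar u_r)$ tends to the periodic function $u_l-\bar u_l$ as $x\to-\infty$. So when $w_{0l}\not\equiv 0$ it lies in neither $L^1$ nor $L^2$, despite the wording of Lemma~\ref{yl4}. The time-uniform bounds you need should therefore be checked on the full source term $\mu U_{xx}-\gamma U_{xxx}-h$ of (\ref{O3.1}), whose far-field periodic parts cancel because $u_l,u_r$ solve the same equation as $u$, together with $L^\infty$ bounds on $U$ and its derivatives.
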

\section{ A priori estimate}\label{section4}
In this section, we show the following a priori estimate for $\phi$
\begin{lma}\label{ly7}  With the same assumption of Proposition \ref{pp3}, It follows  that
	\begin{equation*}
		\| \phi\left( t\right) {\| }^{2} + \int_{0}^{t}\left( {{\|\sqrt{{U}^{r}_{x}}\phi\left( \tau \right) \|}^{2} + {\|{\phi}_{x}\left( \tau \right) \|}^{2}}\right) {\mathrm{d} \tau } \leq  {C}_{0}\left( {{\|{\phi}_{0}\|}^{2} + h\left( \varepsilon\right) +\epsilon}\right) ,
	\end{equation*}
\end{lma}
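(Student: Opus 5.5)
Multiply the perturbation equation (\ref{O3.1}) by $\phi$ and integrate over $\mathbb{R}\times(0,t)$. Note that (\ref{O3.1}) has the roles of $\mu$ and $\gamma$ interchanged relative to (\ref{O1.1}); I read the second-order term as viscous and the third-order term as dispersive, consistent with (\ref{O1.1}). The dispersive term then integrates to zero, since $\int \phi\phi_{xxx}\,dx=0$. The viscous term gives the dissipation $\int_0^t\|\phi_x\|^2\,d\tau$ with a positive coefficient.

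For the flux term, integrate by parts:
\[
\int \phi\,(f(\phi+U)-f(U))_x\,dx=\int U_x\Big(f(U+\phi)-f(U)-f'(U)\phi\Big)\,dx .
\]
This is exact, since $\int \phi_x F\,dx=-\int\phi F_x\,dx$ with $F=f(U+\phi)-f(U)$. Write $U_x=U^r_x+(U-U^r)_x$. By strict convexity of $f$ and $\|\phi\|_{L^\infty}\le C\|\phi\|_1\le 2C\delta$ small, the $U^r_x$ part is bounded below by $c\int U^r_x\phi^2\,dx$ with $c>0$. This uses $U^r_x>0$ from Lemma \ref{yl2.1}(i) and boundedness of $U,U^r$. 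The remaining part is bounded by
\[
C\|(U-U^r)_x\|_{L^\infty}\|\phi\|^2\le C\epsilon e^{-\beta t}\|\phi\|^2 .
\]
Here the $L^\infty$ bound on $(U-U^r)_x$ follows from the $L^1$ bounds on $(U-U^r)_x$ and $(U-U^r)_{xx}$ in Lemma \ref{yl4}. This term will be absorbed by Gronwall, since $\int_0^\infty \epsilon e^{-\beta\tau}\,d\tau\le C\epsilon$.

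The source terms are $\int \phi\,(\gamma U_{xx}-\mu U_{xxx}-h)\,dx$ with the coefficients labelled as in (\ref{O1.1}). Split $U=U^r+(U-U^r)$.

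For the $(U-U^r)$ pieces and $h$, Lemma \ref{yl4} gives a bound $C\epsilon e^{-\beta t}\|\phi\|\le C\epsilon e^{-\beta t}(1+\|\phi\|^2)$, whose time integral is $O(\epsilon)$ after Gronwall.

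For the $U^r$ pieces, integrate by parts once: $\int \phi U^r_{xx}\,dx=-\int\phi_xU^r_x\,dx$ and $\int\phi U^r_{xxx}\,dx=-\int \phi_xU^r_{xx}\,dx$. This is the main obstacle, because a naive Cauchy--Schwarz bound $\frac14\|\phi_x\|^2+C\|U^r_x\|^2$ does not suffice: $\|U^r_x\|^2\sim h(\varepsilon)(1+t)^{-1}$ is not time integrable. Instead I keep the term as $\int\phi U^r_{xx}\,dx$ and use the weighted estimate
\[
\Big|\int\phi U^r_{xx}\,dx\Big|\le \frac{c}{4}\int U^r_x\phi^2\,dx+C\int \frac{(U^r_{xx})^2}{U^r_x}\,dx .
\]
By Lemma \ref{yl2.1}(ii) the last integral is at most $Ch(\varepsilon)(1+t)^{-3/2}$, which is integrable. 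Similarly,
\[
\Big|\int\phi U^r_{xxx}\,dx\Big|\le \frac c4\int U^r_x\phi^2\,dx+C\int \frac{(U^r_{xxx})^2}{U^r_x}\,dx\le \frac c4\int U^r_x\phi^2\,dx+Ch(\varepsilon)(1+t)^{-2}.
\]
Both contribute $Ch(\varepsilon)$ after time integration, and the weighted terms are absorbed into the good term $c\int U^r_x\phi^2\,dx$.

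Collecting everything,
\[
\frac{d}{dt}\|\phi\|^2+c\|\sqrt{U^r_x}\phi\|^2+c\|\phi_x\|^2\le C\epsilon e^{-\beta t}\|\phi\|^2+C\epsilon e^{-\beta t}+Ch(\varepsilon)(1+t)^{-3/2}.
\]
Gronwall's inequality then yields the claimed bound with $C_0$ independent of $t$.
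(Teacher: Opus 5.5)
Your proposal is correct and is essentially the paper's argument. You use the same $L^2$ energy identity, the convexity lower bound by $\int U^r_x\phi^2$, the weighted Young inequality against $\int (U^r_{xx})^2/U^r_x$ and $\int (U^r_{xxx})^2/U^r_x$ from Lemma~\ref{yl2.1}(ii), and Lemma~\ref{yl4} for the exponentially decaying errors. The one difference is bookkeeping: the paper replaces $U$ by $U^r$ in the flux and bounds the discrepancy (together with $h$ and the $(U-U^r)$ source terms) by $\|\phi\|_{L^\infty}\|E\|_{L^1}\le C\epsilon e^{-\beta t}$ via the a priori bound $\|\phi\|_1\le 2\delta$, whereas you keep $U$, split $U_x=U^r_x+(U-U^r)_x$ and close with Gronwall, which incidentally avoids the paper's factor $\|U_x\|$ (infinite in $L^2$ because $U$ oscillates periodically at infinity; $\|U_x\|_{L^\infty}\|U-U^r\|_{L^1}$ is what the paper actually needs there).
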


\begin{proof} Multiplying $(\ref{O3.1})_1$ by $\phi$, one gets that
	\begin{equation*}
		\begin{split}
			&\frac{1}{2}\left(\phi^{2}\right)_{t}  +  \left( {f\left( {\phi  + {U^{r}} }\right)  - f\left( {U^{r}} \right) }\right)_{x} \phi+ \mu \phi_{x}^{2} \\
			=&\mu(\phi\phi_x)_x-  \gamma \left(\phi_{xx}\phi-\frac{1}{2}\phi_x^2\right)_{x}  + \mu  {U^{r}}_{xx}  \phi    - \gamma  {U^{r}}_{xxx}  \phi \\
			&+\left\{-h + \left\{\left( {f\left( {\phi  + {U^{r}} }\right)  - f\left( {U^{r}} \right) }\right)-\left( {f\left( {\phi  + {U} }\right)  - f\left( {U} \right) }\right)                                 \right\}_{x}\right\} \phi \\
			&+\left\{ \mu  (U-U^{r})_{xx}       - \gamma  (U-U^{r})_{xxx}  \right\}\phi \\
			:=&\mu(\phi\phi_x)_x-  \gamma \left(\phi_{xx}\phi-\frac{1}{2}\phi_x^2\right)_{x}  + \mu  {U^{r}}_{xx}  \phi    - \gamma  {U^{r}}_{xxx}  \phi +E \phi .\\
		\end{split}
	\end{equation*}
	Integrating the above equality with respect to $x$ over $\mathbb{R}$, we have

\begin{equation}\label{{O4.1}}
	\begin{split}
		&\frac{1}{2}\frac{\mathrm{d}}{\mathrm{d}t}\int_{-\infty}^{\infty}{\phi}^{2}\mathrm{d} x + \int_{-\infty}^{\infty}\phi{\left( f\left( \phi + U^{r}\right)  - f\left( U^{r}\right) \right) }_{x}\mathrm{d} x + \mu \int_{-\infty}^{\infty}{\phi}_{x}^{2}\mathrm{d} x \\
		=& \mu \int_{-\infty}^{\infty}\phi{U^{r}}_{xx}\mathrm{d} x + \gamma \int_{-\infty}^{\infty}\phi{U^{r}}_{xxx}\mathrm{d} x+\int_{-\infty}^{\infty} E\phi\mathrm{d} x.:=\sum^{3}_{i=1}A_i
	\end{split}
\end{equation}

The second term of (\ref{{O4.1}}) is estimated as follows:
\begin{equation}\label{{O4.2}}
	\begin{split}
		&\int_{-\infty}^{\infty}\phi{\left( f\left( \phi + {U^{r}}\right)  - f\left( {U^{r}}\right) \right) }_{x}\mathrm{d} x\\
		= &\int_{-\infty}^{\infty}\left\{  {-{\left( \int_{U^{r}}^{\phi + {U^{r}}}f\left( s\right) \mathrm{d}s - f\left( {U^{r}}\right) \phi\right) }_{x} + \left( {f\left( {\phi + {U^{r}}}\right)  - f\left( {U^{r}}\right)  - {f}^{\prime }\left( {U^{r}}\right) \phi}\right) U^{r}_{x}}\right\}  \mathrm{d}x\\
		=& \int_{-\infty}^{\infty}\left( {f\left( {\phi + {U^{r}}}\right)  - f\left( {U^{r}}\right)  - {f}^{\prime }\left( {U^{r}}\right) \phi}\right) U^{r}_{x}\mathrm{d} x\\
		\geq&  \frac{\alpha }{2}\int_{-\infty}^{\infty}U^{r}_{x}{\phi}^{2}\mathrm{d} x,
	\end{split}
\end{equation}
where $\alpha$ is a positive constant. The right hand side of (\ref{{O4.2}}) can be estimated by the following:

\begin{equation}\label{O4.3}
	\begin{split}
		A_1+A_2\leq & \frac{\alpha }{4}\int_{-\infty}^{\infty}{\phi}^{2}{U}_{x}\mathrm{d} x + \frac{2{\mu }^{2}}{\alpha }\int_{-\infty}^{\infty}\frac{{U}_{xx}^{2}}{{U}_{x}}\mathrm{d} x + \frac{2{\gamma }^{2}}{\alpha }\int_{-\infty}^{\infty}\frac{{U}_{xxx}^{2}}{{U}_{x}}\mathrm{d} x.\\
	\end{split}
\end{equation}
Now, we estimate the error term $A_3$. By directly calculation, we have
\begin{equation*}
	\begin{split}
		&[f(\phi+U^{r})-f(\phi+U)]_x\\
		=&[f'(\phi+U^{r})-f'(\phi+U)] (\phi_x +U_x)+f'(\phi+U^{r})(U^{r}-U)_x\\
		=&[f''(\eta_1)(\phi_x+U_x)(U^{r}-U)]+f'(\phi+U^{r})(U^{r}-U)_x\\
	\end{split}
\end{equation*}
where $\eta_1$ between $\phi+U^{r}$ and $\phi+U$. From the priori assumption $\phi\left( {x,t}\right)  \in  {X}_{2\delta}\left( {0,T}\right)$, we have $\| \phi{\| }_{1} \leq  {2\delta}$. Further, $\left\| \phi\right\|_{L^{\infty}}  \leq  {2\delta }$  In addition, $f\left( u\right)$ is a smooth convex function on $\mathbb{R}$, so we can set
\begin{equation}\label{O4.4}
	F\left( \delta \right)  = \max \left\{  {\mathop{\sup }\limits_{{\left| \phi\right|  \leq  {2\delta }}}\left| {{f}^{\prime }\left( {\phi + U}\right) }\right| ,\mathop{\sup }\limits_{{\left| \phi\right|  \leq  {2\delta }}}{f}^{\prime \prime }\left( \theta \right) }\right\}  .
\end{equation}
It is easy to see that $F\left( \varepsilon \right)$ is a constant depending on $\varepsilon $. Thus one gets that
\begin{equation}\label{O4.5}
	\begin{split}
		&[f(\phi+U^{r})-f(\phi+U)]_x\\
		\leq& C (|U_x||U^{r}-U|+|\phi_x||U^{r}-U|+|(U^{r}-U)_x|),
	\end{split}
\end{equation}
where $\eta_1$ between $\phi+U^{r}$ and $\phi+U$. Similar, we have
\begin{equation*}\label{x210}
	[f({}U^{r})-f({}U)]_x \leq  C (|U_x||U^{r}-U|+|\phi_x||U^{r}-U|+|(U^{r}-U)_x|)
\end{equation*}
With the aid of lemma \ref{yl4}, the last term of  (\ref{{O4.2}}) can be estimated as
\begin{equation}\label{O4.6}
	\begin{split}
		A_3\leq & \| \phi\|_{L^{\infty}}     \| E\|_{L^{1}}\\
		\leq& C   \left( \| h\|_{L^{1}}+  \sum_{j=0} ^{3}{\left\|\frac{\partial^{j}}{\partial x^{j}}(U-U^{r})\right\|}_{{L}^{1}}+{\left\| (U-U^{r})\right\|\|U_x\|}+\left\| (U-U^{r})\right\|\|\phi_x\|    \right) \\
		\leq& C  {\epsilon }{e}^{-{\beta t}},
	\end{split}
\end{equation}
where we have used lemma \ref{yl2.1} and the Holder inequality in the last inequality. Combining (\ref{O4.3}) with (\ref{O4.6}), we can obtain from (\ref{{O4.1}})
\begin{equation}\label{O4.7}
	\begin{split}
		&\frac{1}{2}\frac{\mathrm{d}}{\mathrm{d}t}\int_{-\infty}^{\infty}{\phi}^{2}\mathrm{d} x + \int_{-\infty}^{\infty}{U}_{x}{\phi}^{2}\mathrm{d} x + \int_{-\infty}^{\infty}{\phi}_{x}^{2}\mathrm{d} x\\
		\leq&  C\left( {\int_{-\infty}^{\infty}\frac{{U}_{xx}^{2}}{{U}_{x}}\mathrm{d} x + \int_{-\infty}^{\infty}\frac{{U}_{xxx}^{2}}{{U}_{x}}\mathrm{d}x}+  {\epsilon }{e}^{-{\beta t}}\right) .\\
	\end{split}
\end{equation}
Integrate (\ref{O4.7}) with respect to $t$ over $\left\lbrack  {0,t}\right\rbrack$ and using Lemma  \ref{yl2.1}, we obtain the proof of this lemma.
\end{proof}

\section{Appendix}
\subsection {Proof of Lemma  \ref{yl4}}
\begin{proof}
	By directly calculation, using $\eqref{x2.6}$, we have

	\begin{equation*}
		\begin{split}
			& U-U^{r}=\frac{U^{r}-\bar{u}_{r}}{\bar{u}_{l}-\bar{u}_{r}}(u_l-\bar{u}_l)+\frac{\bar{u}_{l}-{U}^{r}}{\bar{u}_{l}-\bar{u}_{r}}(u_r-\bar{u}_r),
		\end{split}
	\end{equation*}
	and
	\begin{equation*} 
		\begin{split}
			h{=}&{U}_{t} + f\left( U\right)_x\\
			= &{\left[  {u}_{l}g + {u}_{r}\left( 1 - g\right) \right]  }_{t} + f\left( U\right)_x\\
			=& {u}_{l}{g}_{t} + {u}_{lt}g + {u}_{lt}\left( {1 - g}\right)  - {u}_{l}  {{g}_{t}}   + f\left( U\right)_x\\
			=& \left( {{u}_{l} - {u}_{r}}\right) {g}_{t} + \left( {{u}_{lt} - {u}_{rt}}\right) g + {u}_{lt} + f\left( U\right)_x\\
			=&\left( {{u}_{l} - {u}_{r}}\right) \frac{{U}^{r}_{t}}{\bar{u}_{l} - \bar{u}_{r}} + \left( {{u}_{l} - {u}_{r}}\right)_t g + \left[ {f\left( U\right) -f(u_l)  }\right]_x\\
			=& -\frac{\left( {u}_{l} - {u}_{r}\right) }{\bar{u}_{l} - \bar{u}_{r}}\left( {f\left( {U}^{r}\right) }\right)_x + \left( {{u}_{l} - {u}_{r}}\right)_t g + \left[ {f\left( U\right) -f(u_l)  }\right]_x\\
			=&  -\frac{\left( {u}_{l}-\bar{u}_{l}+\bar{u}_{r} - {u}_{r}\right) }{\bar{u}_{l} - \bar{u}_{r}}\left( {f\left( {U}^{r}\right) }\right)_x - \left( {f({u}_{l}) - f({u}_{r})}\right)_x g + \left[ {f\left( U\right) -f(u_l) -f(U^{r}) }\right]_x\\
			=&  -\frac{\left( {u}_{l}-\bar{u}_{l}+\bar{u}_{r} - {u}_{r}\right) }{\bar{u}_{l} - \bar{u}_{r}}\left( {f\left( {U}^{r}\right) }\right)_x \\
			&- \left[{f'({u}_{l})}\left(u_{l}-\bar{u}_{l} \right)_x-{f'({u}_{r})}\left(u_{r}-\bar{u}_{r} \right)_x     \right] g\\
			& + \left[ f'\left( U\right)(U-U^{r})_{x}\right]+     [f'(U) -f'(U^{r})] (U^{r})_x-     f'(u_l)(u_l-\bar{u}_l)_{x}. \\
		\end{split}
	\end{equation*}
	Thus, one gets that
	\begin{equation*}
		|U-U^{r}|,|h|\leq C      \left(    |(u_r-\bar{u}_r)|+ |(u_l-\bar{u}_l)|  +|(u_r-\bar{u}_r)_{x}|+ |(u_l-\bar{u}_l)_{x}|     \right).
	\end{equation*}
	By directly calculation, with the aid of Lemma \ref{yl2}, the lemma can be prove immediately.

\end{proof}

\end{document}